\definecolor{myRed}{RGB}{200,0,0}
\definecolor{myBlue}{RGB}{0,0,200}
\newif\ifskip
\newif\ifmargin
\newtheorem{thm}{Theorem}{\bfseries}{\itshape}
\newaliascnt{lemm}{thm}
\newtheorem{lemm}[lemm]{Lemma}{\bfseries}{\itshape}
\newaliascnt{prop}{thm}
\newtheorem{prop}[prop]{Proposition}{\bfseries}{\itshape}
\newtheorem{defi}[thm]{Definition}{\bfseries}{\itshape}
\newtheorem{examples}[thm]{Examples}{\bfseries}{\itshape}
{\bfseries}{\itshape}
\newtheorem{coro}[thm]{Corollary}{\bfseries}{\itshape}
{\bfseries}{\itshape}
{\bfseries}{\itshape}
{\bfseries}{\itshape}
\newtheorem{oproblem}{Problem}{\bfseries}{\itshape}
\renewcommand{\bar}{\overline}
\newcommand{\SOL}{\mathrm{SOL}}
\newcommand{\FOL}{\mathrm{FOL}}
\newcommand{\MSOL}{\mathrm{MSOL}}
\newcommand{\CMSOL}{\mathrm{CMSOL}}
\newcommand{\CFOL}{\mathrm{CFOL}}
\newcommand{\FPL}{\mathrm{FPL}}
\newcommand{\N}{{\mathbb N}}
\newcommand{\bZ}{{\mathbb Z}}
\newcommand{\fA}{{\mathfrak A}}
\newcommand{\fB}{{\mathfrak B}}
\newcommand{\fC}{{\mathfrak C}}
\newcommand{\cL}{\mathcal{L}}
\newcommand{\Str}{\mathrm{Str}}
\begin{document}
\title{Logics of Finite Hankel Rank
\\
\vskip1em
\small (For Yuri Gurevich at his 75th Birthday)
}
\author[1,2]{\normalsize Nadia Labai\thanks{Supported by the National Research Network RiSE (S114), and the LogiCS doctoral program (W1255) funded by the Austrian Science Fund (FWF).}}
\author[2]{\normalsize Johann A.~Makowsky\thanks{Partially supported by a grant of Technion Research Authority. \\ The final publication of this paper is available at Springer via \url{http://dx.doi.org/10.1007/978-3-319-23534-9_14}
}
}
\affil[1]{Department of Informatics, Vienna University of Technology \texttt{\small labai@forsyte.at}}
\affil[2]{Department of Computer Science, Technion - Israel Institute of Technology {\texttt{\small janos@cs.technion.ac.il}}}	

\renewcommand\Authands{ and }
\date{}
\maketitle
\begin{abstract}
We discuss the Feferman-Vaught Theorem in the setting
of abstract model theory for finite structures.
We look at sum-like and product-like binary operations
on finite structures and their Hankel matrices. 
We show the connection between Hankel matrices and the 
Feferman-Vaught Theorem. The largest logic known to
satisfy a Feferman-Vaught Theorem for product-like operations is
$\CFOL$, first order logic with modular counting quantifiers.
For sum-like operations it is $\CMSOL$, the corresponding monadic 
second order logic.
We discuss whether there are maximal logics satisfying Feferman-Vaught Theorems
for finite structures.

\end{abstract}

\sloppy
\section{Introduction}
\subsection{Yuri's Quest for Logics for Computer Science}
The second author (JAM) first met Yuri Gurevich in spring 1976, while being a Lady Davis
fellow at the Hebrew University, on leave from the Free University, Berlin.
Yuri had just recently emigrated to Israel.
Yuri was puzzled by the supposed leftist views of JAM, 
perceiving them as antagonizing. This lead to heated political discussions.
In the following time, JAM 
spent more visiting periods in Israel, culminating
in the Logic Year of 1980/81 at the Einstein Institute of the Hebrew University,
after which 
he finally joined
the Computer Science Department at the Technion in Haifa.

At this time both Yuri and JAM worked on chapters to be published in \cite{bk:BF},
Yuri on Monadic Second Order Logic, and JAM on abstract model theory.
Abstract model theory deals with meta-mathematical characterizations of logic.
Pioneered by P. Lindstr\"om, G. Kreisel and J. Barwise, in
\cite{bk:BF,ar:Barwise74},\cite{ar:Kreisel68},\cite{ar:Lindstr1,ar:Lindstr2},
First Order Logic and admissible fragments of infinitary logic were characterized.
Inspired by H. Scholz's problem,
\cite{durand2012fifty},
R. Fagin initiated similar characterizations when models are restricted to finite models,
connecting finite model theory to complexity theory.

At about the same time Yuri and JAM
both underwent a transition in research orientation, slowly refocusing 
on questions in theoretical Computer Science.
Two papers document their
evolving views at the time, 
\cite{bk:gurevich},\cite{pr:JAMmth1}.
Yuri was vividly interested in 
\cite{pr:JAMmth1} and
frequent discussions between Yuri and JAM between 1980 and 1982 shaped both papers.
In \cite{pr:JAMmth1} the use  for theoretical computer science of classical model theoretic methods, 
in particular, the role of the classical preservation theorems (see below),
was explored, see also  \cite{ar:MMahr},\cite{ar:JAMhorn}.
Yuri grasped early on that these preservation theorems do not hold when one restricts
First Order Logic to finite models.

Under the influence of JAM's work in abstract model theory, the foundations
of database theory and logic programming,
\cite{pr:ChandraLewisM,pr:ChandraLewisM80},\cite{ar:MMahr},\cite{ar:JAMhorn},\cite{bk:BFxviii},\cite{ar:MVardi},
and the work of N. Immerman and M. Vardi,
\cite{ar:IMM1},\cite{pr:vardi82}, 
Yuri  stressed
the difference between classical model theory and finite model theory.
In \cite{bk:gurevich}, he
formulated what he calls the Fundamental Problem
of finite model theory. This problem is, even after 30 years, still open (\cite{bk:gurevich}):
Is there a logic $\mathcal{L}$ such that any class  $\Phi$ of finite structures is definable
in $\mathcal{L}$ iff $\Phi$ is recognizable in polynomial time.
For {\em ordered} finite structures there are several such logics,
\cite{Graedel91b},\cite{ar:IMM1},\cite{pr:JAM-kgc97},\cite{ar:MPlcc94,ar:MPcsl93},\cite{pr:vardi82}. 
We give a precise statement of the Fundamental Problem  in \autoref{se:qrank}, \autoref{FundProb}.

\subsection{Preservation Theorems}
Let $\mathcal{F}_1, \mathcal{F}_2$ be two syntactically defined fragments of a logic $\mathcal{L}$, and
let $R$ be a binary relation between structures.
{\em Preservation theorems} are of the form: 
\begin{quote}
Let $\phi \in \mathcal{F}_1$. 
The following are equivalent:
\begin{enumerate}
\item
For all structures 
$\mathfrak{A}, \mathfrak{B}$ 
with $R(\mathfrak{A}, \mathfrak{B})$, we have that if
$\mathfrak{A}$ satisfies $\phi_1 \in \mathcal{F}_1$, then also
$\mathfrak{B}$ satisfies $\phi_1$. 
\item
There is 
$\phi_2 \in \mathcal{F}_2$ which is logically equivalent to $\phi_1$.
\end{enumerate}
\end{quote}
A typical example is Tarski's Theorem for first order logic, with
$\mathcal{F}_1$ all of first order logic, 
$\mathcal{F}_2$ its universal formulas, and
$R(\mathfrak{A}, \mathfrak{B})$  holds if 
$\mathfrak{B}$  is a substructure of
$\mathfrak{A}$. Many other preservation theorems can be found in
\cite{bk:CK}. In response to \cite{pr:JAMmth1},\cite{ar:MVardi},
Yuri pointed out in
\cite{bk:gurevich} that most of the preservation theorems for first order logic
fail when one restricts models to be finite.

\subsection{Reduction Theorems}
Let $(\mathcal{F}_2)^*$ denote the finite sequences of formulas in $\mathcal{F}_2$, and
let $\Box$ be a binary operation on finite structures.
{\em Reduction theorems} are of the form: 
\begin{quote}
There is a function $p: \mathcal{F}_1 \rightarrow (\mathcal{F}_2)^*$
with $p(\phi)= (\psi_1, \ldots, \psi_{2 \cdot k(\phi)})$
and a Boolean function $B_\phi$
such that for all structures 
$\mathfrak{A}= \mathfrak{B}_1 \Box \mathfrak{B}_2$
and
all $\phi \in \mathcal{F}_1$, the structure
$\mathfrak{A}$ satisfies $\phi \in \mathcal{F}_1$
iff 
\begin{equation}
B(
\psi_1^{{B}_1}, \ldots , \psi_{k(\phi)}^{{B}_1},
\psi_1^{{B}_2}, \ldots , \psi_{k(\phi)}^{{B}_2}
) 
=1
\end{equation}
\end{quote}
where for $1 \leq j \leq k$ we have $\psi_j^{B_1} =1$ iff $\mathfrak{B}_1 \models \psi_j$
and $\psi_j^{B_2} =1$ iff $\mathfrak{B}_2 \models \psi_j$.
There are also versions for $(n)$-ary operations $\Box$.

The most famous examples of such reduction theorems are the
Feferman-Vaught-type theorems,
\cite{ar:Feferman68,ar:Feferman74b,ar:FefermanKreisel66,ar:FV},\cite{ar:gurevich79},\cite{ar:MakowskyTARSKI}.
A simple case is Monadic Second Order Logic ($\MSOL$), where 
$\mathcal{F}_1=\mathcal{F}_2 = \MSOL$ and
$\mathfrak{A}$ is the disjoint union $\sqcup$ of $\mathfrak{B}_1$ and $\mathfrak{B}_2$.
Additionally it is required that the quantifier ranks of the formulas in
$p(\phi)$ do not exceed the quantifier rank of $\phi$.
In \cite[Chapter 4]{bk:BFxviii} such reduction theorems are discussed in the context of
abstract model theory. However,  in \cite[Chapter 4]{bk:BFxviii} the quantifier rank has no role.

In contrast to preservation theorems, reduction theorems still hold when restricted
to finite structures.

\subsection{Purpose of this Paper}
In \cite{ar:MakowskyTARSKI} JAM discussed Feferman-Vaught-type theorems
in finite model theory and their  algorithmic uses.
In Section 7 of that paper, it was asked whether one can characterize logics over finite structures which
satisfy the Feferman-Vaught Theorem for the disjoint union $\sqcup$.
The purpose of this paper is to outline new directions to attack this problem.
The novelty in our approach is in relating the Feferman-Vaught Theorem to Hankel matrices 
of sum-like and connection-like
operations on finite structures.
Hankel matrices for connection-like operations, aka connection matrices, have many algorithmic applications,
cf. \cite{ar:LabaiMakowskyFPSAC},\cite{bk:Lovasz-hom}. 

In \autoref{se:qrank} we set up the necessary background on  Lindstr\"om logics, 
quantifier rank, translation schemes,
and sum-like operations.
A Hankel matrix  $H(\Phi, \Box)$ involves a binary operation $\Box$ on finite $\sigma$-structures which results in a $\tau$-structure, 
and a class of $\tau$-structures $\Phi$
closed under isomorphisms (aka a $\tau$-property).
In \autoref{se:hankel} we give the necessary definitions of Hankel matrices and their rank.
We then study $\tau$-properties $\Phi$ where $H(\Phi, \Box)$ has finite rank.
We show that there are uncountably many such properties and state that the class of all properties that have
finite rank for every sum-like operation $\Box$ forms a Lindstr\"om logic,
Theorems \ref{th:1} and \ref{th:nadia}.
In   \autoref{se:fv} we define various forms of Feferman-Vaught-type properties 
of Lindstr\"om logics equipped with
a quantifier rank, and discuss their connection to Hankel matrices.
\autoref{th:fv-main} describes their exact relationship.
A logic has finite S-rank, if all its definable $\tau$-properties have Hankel matrices 
of finite rank for every sum-like operation.
In  \autoref{se:sclosure} we sketch how to construct a logic satisfying the Feferman-Vaught Theorem 
for sum-like operations
from a logic which has finite S-rank.
Finally, in  \autoref{se:conclu}, we discuss our conclusions and state open problems.
A full version of this paper is in preparation, \cite{up:LabaiMakowskyFV}.

\section{Background}
\label{se:qrank}
\subsection{Logics with Quantifier Rank}
We assume the reader is familiar with the basic definitions of generalized logics,
see \cite{bk:BF},\cite{bk:EFT94}.
We denote by $\tau$ finite relational vocabularies, possibly with  constant symbols for named elements.
$\tau$-structures are always {\em finite} 
unless otherwise stated.
A finite structure of size $n$ is always assumed to have as its universe the set $[n]=\{ 1, \ldots , n\}$. 
A class of finite $\tau$-structures $\Phi$ closed under $\tau$-isomorphisms
is called a {\em $\tau$-property}.

A {\em Lindstr\"om Logic $\mathcal{L}$}  is a triple
$$
\langle \cL(\tau), \Str(\tau), \models_{\cL},\rangle
$$
where  $\cL(\tau)$ is the set of $\tau$-sentences of $\cL$,
$\Str(\tau)$ are the {\em finite} $\tau$-structures, $\models_{\cL}$ is the satisfaction relation.
The satisfaction relation is a ternary relation between $\tau$-structures, assignments and formulas.
An {\em assignment} for variables in a $\tau$-structure $\fA$ is a function which assigns to each variable an element of
the universe of $\fA$.
We always assume that the logic contains all the atomic formulas with free variables, and
is closed under Boolean operations and first order quantifications.
A logic $\cL_0$ is a {\em sublogic} of a logic $\cL$ iff $\cL_0(\tau) \subseteq \cL(\tau)$ for all $\tau$
and the satisfaction relation of $\cL_0$ is the satisfaction relation induced by $\cL$.

A {\em Gurevich logic $\cL$} is a Lindstr\"om logic where additionally
the sets $\cL(\tau)$ are  uniformly computable.

A {\em Lindstr\"om logic $\mathcal{L}$ with a quantifier rank} is a 
quadruple
$$
\langle \cL(\tau), \Str(\tau), \models_{\cL}, \rho \rangle
$$
where additionally $\rho$ is a {\em quantifier rank function}.
A quantifier rank (q-rank) $\rho$ is a function $\rho: \cL(\tau) \rightarrow \N$ such that
\begin{enumerate}
\item
For atomic formulas $\phi$ the q-rank $\rho(\phi)=0$.
\item
Boolean operations and translations induced by translation schemes (see  \autoref{subs:sumlike}) with formulas of q-rank $0$ preserve  maximal q-rank.
\end{enumerate}
A quantifier rank $\rho$ is {\em nice} if additionally it satisfies the following:

\begin{enumerate}
\item[(iii)]
For finite $\tau$,
there are, 
up to logical equivalence, 
only finitely many  $\cL(\tau)$-formulas 
of fixed q-rank
with a fixed set of free variables.
\end{enumerate}
In the presence of (iii) we define
{\em Hintikka formulas} 
as maximally consistent $\cL(\tau)$-formulas of fixed q-rank.
A {\em nice logic $\cL$} is Lindstr\"om logic with a  nice quantifier rank $\rho$.
We note that in a nice logic, the only formulas $\phi$ of q-rank $\rho(\phi)=0$
are Boolean combinations of atomic formulas.

We denote by $\FOL$, $\MSOL$, $\SOL$, first order, monadic second order, and full second order logic,
respectively. 
All these logics are nice Gurevich logics with their natural quantifier rank, and they are sublogics of $\SOL$.

We denote by $\CFOL$, $\CMSOL$, first order and monadic second order logic
augmented by the modular counting quantifiers $D_{k,m} x. \phi(x)$ which say that there are
modulo $m$, exactly $k$ many elements satisfying $\phi$.
In the presence of the quantifier $D_{k,m}$ there are two definitions of the quantifier rank:
$\rho_1(D_{k,m} x. \phi(x)) = 1 + \rho_1(\phi)$ and
$\rho_2(D_{k,m} x. \phi(x)) = m + \rho_2(\phi)$.
Given any finite set of variables, for $\rho_1$ we have, up to logical equivalence, infinitely many formulas $\phi$ with $\rho_1(\phi)=1$,
whereas for $\rho_2$ there are only finitely many such formulas.
$\CFOL$ and $\CMSOL$ with the quantifier rank $\rho_2$ are nice Gurevich logics.
In the sequel we always use $\rho_2$ as the quantifier rank for $\CFOL$ and $\CMSOL$.

$\FPL$, fixed point logic, is also a Gurevich logic and a sublogic of $\SOL$.
However, {\em order invariant $\FPL$} is a sublogic of $\SOL$ which is not a Lindstr\"om logic.
The definable $\tau$-properties in order invariant $\FPL$ are exactly the $\tau$-properties
recognizable in polynomial time.
For $\FPL$
and order invariant $\FPL$ see 
\cite{Graedel91b},\cite{ar:IMM1},\cite{pr:JAM-kgc97},\cite{ar:MPlcc94,ar:MPcsl93},\cite{pr:vardi82}.

\begin{oproblem}[Y. Gurevich, \cite{bk:gurevich}]
\label{FundProb}
Is there a Gurevich logic $\cL$ such that
the $\cL$-definable $\tau$-properties 
are exactly the $\tau$-properties
recognizable in polynomial time.
\end{oproblem}

\subsection{Sum-Like and Product-Like Operations on  \\ $\tau$-structures}
\label{subs:sumlike}
The following definitions are taken from \cite{ar:MakowskyTARSKI}.
Let $\tau, \sigma$ be two relational vocabularies with $\tau =\langle R_1, \ldots , R_m \rangle$,
and denote by
$r(i)$ the arity of $R_i$.
A {\em $(\sigma-\tau)$ translation scheme $T$} is a sequence of $\cL(\sigma)$-formulas $(\phi; \phi_1, \ldots, \phi_m)$
where $\phi$ has $k$ free variables, and each $\phi_i$ has $k \cdot r(i)$ free variables.
In this paper we do not allow redefining equality, nor do we allow name changing of constants.

We associate with $T$ two mappings 
$T^{\star}: \Str(\sigma) \rightarrow \Str(\tau)$ 
and 
$T^{\sharp}: \cL(\tau) \rightarrow \cL(\sigma)$, 
the {\em transduction and translation induced by $T$}. The transduction of a $\sigma$-structure $\fA$ is the $\tau$-structure $T^\star(\fA)$ where the vocabulary is interpreted by the formulas given in the translation scheme. The translation of a $\tau$-formula is obtained by substituting atomic $\tau$-formulas with their definition through $\sigma$-formulas given by the translation scheme.
A translation scheme (induced transduction, induced translation) is {\em scalar} if $k=1$, otherwise it is {\em  $k$-vectorized}. It is {\em quantifier-free} if so are the formulas $\phi; \phi_1, \ldots, \phi_m$.

If $\tau$ has no constant symbols,
the disjoint union  $\fA \sqcup \fB$ of two $\tau$-structures $\fA, \fB$ is the $\tau$-structure obtained by taking the disjoint union of the universes
and of the corresponding relation interpretations in $\fA$ and $\fB$.
On the other hand, if $\tau$ has finitely many constant symbols $a_1, \ldots, a_k$ the disjoint union of two
$\tau$-structures is a $\tau'$-structure with twice as many constant symbols, $\tau' = \tau \cup \{a_1', \ldots, a_k'\}$.
{\em Connection operations} are similar to disjoint unions with constants, where equally named elements are identified.
We call the disjoint union followed by the pairwise identification of $k$  constant pairs the $k$-sum,
cf. \cite{bk:Lovasz-hom}.

A binary operation $\Box: \Str(\sigma) \times \Str(\sigma) \rightarrow \Str(\tau)$ is {\em sum-like (product-like)} 
if it is obtained from the disjoint union of $\sigma$-structures by
applying a quantifier-free scalar (vectorized) $(\sigma-\tau)$-transduction.
A binary operation $\Box: \Str(\sigma) \times \Str(\sigma) \rightarrow \Str(\tau)$ is {\em connection-like}  
if it is obtained from  a connection operation on $\sigma$-structures by
applying a quantifier-free scalar $(\sigma-\tau)$-transduction.
If $\sigma = \tau$, we say $\Box$ is an operation on 
$\tau$-structures.

Connection-like operations are not sum-like according to the definitions in this paper\footnote{They are nevertheless called sum-like
in  \cite{ar:MakowskyTARSKI}.}.
Although connection operations are frequently used in the literature, cf. \cite{bk:Lovasz-hom},\cite{ar:MakowskyTARSKI},
we do not deal with them in this paper. Most of our results here can be carried over to connection-like operations,
but the formalism required to deal with the identification of constants is tedious and needs more place than available here.

\begin{prop}
Let $\tau$ be a fixed finite relational vocabulary. 
\begin{enumerate}
\item
There are only finitely many  sum-like binary operations on $\tau$-structures.
\item
There is a function $\alpha: \N \rightarrow \N$ such that
for each $k \in \N$ there are only $\alpha(k)$ many $k$-vectorized product-like binary operations on $\tau$-structures.
\end{enumerate}
\end{prop}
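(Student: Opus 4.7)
The plan is to reduce both parts to a single finiteness observation: for a fixed finite relational vocabulary $\sigma$, the quantifier-free $\sigma$-formulas in $N$ free variables form a finite set up to logical equivalence, with a bound depending only on $N$ and $\sigma$. I would first note that the number of atomic $\sigma$-formulas with variables drawn from a set of size $N$ is bounded by $N^2 + \sum_{i=1}^{m} N^{r(i)}$, since each atomic formula is either an equality between two of the $N$ variables or instantiates some $R_i$ at $r(i)$ of them. Quantifier-free formulas are Boolean combinations of such atoms, so up to equivalence there are at most $2^{2^{N^2 + \sum_i N^{r(i)}}}$ of them, which is a finite explicit function of $N$ and $\sigma$.

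For (i), a scalar $(\sigma-\tau)$-translation scheme is a tuple $(\phi; \phi_1,\ldots,\phi_m)$ in which every component is a quantifier-free $\sigma$-formula with at most $N_0 := \max\{1, r(1),\ldots,r(m)\}$ free variables. By the observation above, each coordinate ranges over a finite set up to equivalence. Since the underlying disjoint union used to build a sum-like operation is canonical (no choice is involved, aside from a fixed doubling of constants when $\sigma$ has constant symbols), there are only finitely many such schemes up to equivalence, and hence only finitely many sum-like operations on $\tau$-structures.

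For (ii), I would apply the same counting with $k$ inserted. A $k$-vectorized scheme has $\phi$ with $k$ free variables and each $\phi_i$ with $k \cdot r(i)$ free variables, so every component has at most $k \cdot N_0$ free variables. Plugging $N = k \cdot N_0$ into the bound from the first paragraph yields an upper bound on the number of $k$-vectorized product-like schemes that depends only on $k$ and the fixed $\tau$; I would take this bound as $\alpha(k)$.

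The only subtlety, rather than a real obstacle, is justifying the passage from equivalence classes of translation schemes to equality of the induced binary operations on structures. This step is immediate from the definition of the induced transduction $T^\star$, which evaluates the formulas of $T$ in the disjoint union and so depends only on their satisfaction behaviour, not on their syntactic form. Hence pointwise logically equivalent schemes define the same transduction and in particular the same sum-like (respectively product-like) operation, so the finite count of equivalence classes of schemes upper-bounds the count of operations.
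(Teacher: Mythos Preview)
Your argument is correct, and it is exactly the natural one. The paper states this proposition without proof, so there is nothing substantive to compare against; your counting argument---bounding the number of atomic formulas in a fixed finite variable set, passing to Boolean combinations, and then taking the product over the components of the scheme---is the intended routine verification. The one cosmetic point is that for operations \emph{on $\tau$-structures} (the case the proposition addresses), the source vocabulary of the transduction is $\tau$ itself (or $\tau'$ with doubled constants), so your generic $\sigma$ should be read as that fixed vocabulary; you clearly have this in mind, but it would read more cleanly if stated explicitly.
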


\subsection{Abstract Lindstr\"om Logics}

In \cite{ar:Lindstr2} a syntax-free definition of a logic is given.
An  {\em abstract Lindstr\"om logic $\cL$} consists of a family $\mathrm{Mod}(\tau)$ of $\tau$-properties closed under
certain operations between properties of possibly different vocabularies.
One thinks of  $\mathrm{Mod}(\tau)$  as the family of $\cL$-definable $\tau$-properties.
We do not need all the details here, the reader may consult \cite{bk:BF},\cite{ar:Lindstr1,ar:Lindstr2}.
The main point we need is that every abstract Lindstr\"om logic $\cL$ can be given a canonical
syntax  $\cL(\tau)$ using generalized quantifiers.

\section{Hankel matrices of $\tau$-properties}
\label{se:hankel}

\subsection{Hankel Matrices}
In linear algebra, a {\em Hankel matrix}, named after Hermann Hankel, is a  real or complex 
square matrix with constant skew-diagonals.
In automata theory, a {\em Hankel matrix} $H(f,\circ)$ is an infinite matrix where the rows and columns are labeled with
words $w$ over a fixed alphabet $\Sigma$, and the entry $H(f,\circ)_{u,v}$ is given by $f(u \circ v)$.
Here 
$f: \Sigma^\star \rightarrow \mathbb{R}$ is a real-valued word function
and $\circ$ denotes concatenation. 
A classical result  of G.W.~Carlyle and  A.~Paz  \cite{ar:CarlylePaz1971} 
in automata theory characterizes real-valued word functions $f$
recognizable by weighted (aka multiplicity) automata in algebraic terms.

Hankel matrices 
for graph parameters  
(aka connection matrices) 
were introduced by
L. Lov\'asz \cite{ar:Lovasz07} and used in \cite{ar:FreedmanLovaszSchrijver07},\cite{bk:Lovasz-hom} 
to study real-valued partition functions of graphs.
In  \cite{ar:FreedmanLovaszSchrijver07},\cite{bk:Lovasz-hom}
the role of concatenation is played by $k$-connections
of $k$-graphs, i.e., graphs with $k$ distinguished vertices $v_1, \ldots, v_k$. 

In this paper we study  $(0,1)$-matrices which are Hankel matrices of properties of general relational $\tau$-structures
and the role of $k$-connections is played by more general binary operations,
the sum-like and product-like operations introduced in \cite{th:Ravve95} and further studied in
\cite{ar:MakowskyTARSKI}.

\begin{defi}
Let $\Box: \Str(\sigma) \times \Str(\sigma) \rightarrow \Str(\tau)$ 
be a binary operation on finite $\sigma$-structures returning a $\tau$-structure,
and let $\Phi$ be a $\tau$-property.
\begin{enumerate}
\item
The Boolean Hankel matrix $H(\Phi, \Box)$ is the infinite $(0,1)$-matrix
where the rows and columns are labeled by all the finite $\sigma$-structures,
and $H(\Phi, \Box)_{\fA,\fB} =1$ iff $\fA \Box \fB \in \Phi$.
\item
The rank of
$H(\Phi, \Box)$ over $\bZ_2$ is denoted by
$r(\Phi, \Box)$, and is referred to as the Boolean rank. 
\item
We say that $\Phi$ has finite $\Box$-rank iff
$r(\Phi, \Box)$  is finite.
\item
Two $\sigma$-structures are $(\Phi,\Box)$-equivalent, $\fA \equiv_{\Phi,\Box} \fB$,
if for all finite $\sigma$-structures $\fC$ we have
\begin{equation}
\fA \Box \fC \in \Phi \ \mbox{   iff   } \ \fB \Box \fC \in \Phi 
\end{equation}
\item
For a $\sigma$-structure $\fA$, we denote by $[\fA]_{\Phi,\Box}$ the
$(\Phi,\Box)$-equivalence class of $\fA$.
\item
We say that $\Phi$ has finite $\Box$-index\footnote{
K. Compton and
I. Gessel, \cite{ar:Compton83},\cite{ar:Gessel84}, already considered $\tau$-properties of 
finite $\sqcup$-index for the disjoint union of $\tau$-structures. 
In \cite{ar:FischerKotekMakowsky11} this is called {\em Gessel index}.
C. Blatter and E. Specker, in \cite{pr:BlatterSpecker81},\cite{ar:Specker88}, consider a substitution operation on
pointed $\tau$-structures, $Subst(\fA, a, \fB)$, where the structure $\fB$ is inserted into $\fA$ at a point $a$. 
$Subst(\fA, a, \fB)$ is sum-like, and  the
$Subst$-index is called in \cite{ar:FischerKotekMakowsky11} Specker index.
}
iff
there are only finitely many
$(\Phi,\Box)$-equivalence classes.
\end{enumerate}
\end{defi}

\begin{prop}
\label{prop:index}
Let $\Phi$ be a $\tau$-property.
\\
$\Phi$ has finite $\Box$-rank iff
$\Phi$ has finite $\Box$-index. 
\end{prop}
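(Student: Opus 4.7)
The plan is to identify the $(\Phi, \Box)$-equivalence classes with the distinct rows of the Hankel matrix $H(\Phi, \Box)$, and then use a simple counting bound that holds specifically over $\bZ_2$.

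First I would observe the row-equivalence reformulation: by definition, $\fA \equiv_{\Phi,\Box} \fB$ iff for every finite $\sigma$-structure $\fC$ we have $H(\Phi,\Box)_{\fA,\fC} = H(\Phi,\Box)_{\fB,\fC}$, i.e., iff the rows of $H(\Phi,\Box)$ indexed by $\fA$ and $\fB$ are identical. Thus the number of $(\Phi,\Box)$-equivalence classes equals the number of distinct rows of $H(\Phi,\Box)$. Call this cardinality $d(\Phi,\Box)$.

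For the direction ``finite $\Box$-index implies finite $\Box$-rank'', I would use that the row rank over any field is at most the number of distinct rows, so $r(\Phi,\Box) \leq d(\Phi,\Box)$. Hence finitely many equivalence classes force finite Boolean rank.

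For the converse, the key observation is that we are working over the two-element field $\bZ_2$: if the row space of $H(\Phi,\Box)$ over $\bZ_2$ has dimension $r(\Phi,\Box) = r < \infty$, then the row space has exactly $2^r$ elements, and in particular there can be at most $2^r$ distinct rows. Consequently $d(\Phi,\Box) \leq 2^{r(\Phi,\Box)}$, so finite Boolean rank forces finite $\Box$-index (in fact, exponentially bounded by the rank). Combining the two bounds yields the biconditional.

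No step looks like a serious obstacle: the equivalence between row-distinctness and $(\Phi,\Box)$-equivalence is immediate from the definitions, and the only substantive ingredient is the finite-field counting argument $d \leq 2^r$, which is specific to $\bZ_2$ and would fail over an infinite field (justifying why the statement is phrased for Boolean rank). The mild subtlety to mention is that we are considering an infinite matrix, so ``rank'' must be interpreted as the supremum of ranks of finite submatrices (equivalently, the dimension of the row space); both bounds in the argument hold in this interpretation without change.
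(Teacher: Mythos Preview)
Your proposal is correct and follows essentially the same approach as the paper's own sketch: identify $(\Phi,\Box)$-equivalence classes with distinct rows of $H(\Phi,\Box)$, then use that over $\bZ_2$ finite rank forces finitely many distinct rows, while the converse is immediate since rank is bounded by the number of distinct rows. You make explicit the bound $d(\Phi,\Box) \leq 2^{r(\Phi,\Box)}$ that the paper leaves implicit in the phrase ``as the rank is over $\bZ_2$, finite rank implies there are only finitely many different rows'', but the argument is the same.
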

\begin{proof}[Sketch of proof]
We first note that two $\sigma$-structures $\fA, \fB$ are in the same equivalence class of $\equiv_{\Phi,\Box}$
iff they have identical rows in $H(\Phi,\Box)$.
As the rank is over $\bZ_2$, finite rank implies there are only finitely many different rows in $H(\Phi,\Box)$.
The converse is obvious. 
\end{proof}

\subsection{$\tau$-Properties of Finite $\Box$-rank}
We next show that there are uncountably many $\tau$-properties of finite $\sqcup$-rank.
We also  study the relationship between the $\Box_1$-rank and $\Box_2$-rank of $\tau$-properties for 
different operations $\Box_1$ and $\Box_2$.

We first need a lemma.
\begin{lemm}
\label{le:periodic}
Let $A \subseteq \N$  and let
$M_A$ be the infinite $(0,1)$-matrix whose columns and rows are labeled by the natural numbers $\N$,
and $(M_A)_{i,j}=1$ iff $i+j \in A$.
Then $M_A$ has finite rank over $\bZ_2$ iff $A$ is ultimately periodic.
\end{lemm}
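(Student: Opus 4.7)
The plan is to exploit the Hankel (shift) structure of $M_A$ together with the observation, already used in the proof of \autoref{prop:index}, that a $(0,1)$-matrix has finite rank over $\bZ_2$ if and only if it has only finitely many distinct rows. Indeed, a rank-$r$ matrix over $\bZ_2$ has a row space of cardinality at most $2^r$, so finite rank forces finitely many distinct rows; the converse is immediate, since finitely many distinct rows generate a finite-dimensional row space. With this in hand, the whole question reduces to counting distinct rows of $M_A$. Writing $\chi_A : \N \to \{0,1\}$ for the characteristic function of $A$, the $i$-th row of $M_A$ is the shifted sequence $(\chi_A(i+j))_{j \in \N}$.

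For the easy direction, suppose $A$ is ultimately periodic with pre-period $N$ and period $p$, so $\chi_A(n) = \chi_A(n+p)$ for all $n \geq N$. Then whenever $i, i' \geq N$ and $i \equiv i' \pmod{p}$, the corresponding shifts of $\chi_A$ agree, so $M_A$ has at most $N + p$ distinct rows and hence finite rank over $\bZ_2$.

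For the reverse direction, assume $M_A$ has finite rank, so that only finitely many distinct rows appear. By the pigeonhole principle applied to $\mathrm{row}_0, \mathrm{row}_1, \mathrm{row}_2, \ldots$, there exist $0 \leq i < j$ with $\mathrm{row}_i = \mathrm{row}_j$. Unpacking, $\chi_A(i+n) = \chi_A(j+n)$ for all $n \geq 0$, equivalently $\chi_A(m) = \chi_A(m + (j-i))$ for all $m \geq i$. Setting $N := i$ and $p := j - i$ exhibits $A$ as ultimately periodic.

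I do not foresee a real obstacle: the Hankel shape makes the rows indexed by $\N$ literally the shifts of a single binary sequence, and the key feature is that over $\bZ_2$ (unlike over $\bR$ or $\C$) finite rank and finitely many distinct rows coincide. A tempting alternative would be to extract from a linear dependence among the first $r+1$ rows a linear recurrence over $\bZ_2$ and then invoke the finiteness of the state space $\{0,1\}^r$; that route also works but is strictly more machinery than the pigeonhole-on-shifts argument above actually needs.
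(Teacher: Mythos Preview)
Your argument is correct. The paper actually states this lemma without proof, so there is no proof to compare against; your reduction via the observation from \autoref{prop:index} (finite rank over $\bZ_2$ $\Leftrightarrow$ finitely many distinct rows) followed by the pigeonhole-on-shifts argument is exactly in the spirit of the surrounding material and fills the gap cleanly.
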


\begin{thm}
\label{th:1}
Let $\tau_{graphs}$ be the vocabulary with one binary edge-relation, and $\tau_1$ be $\tau_{graphs}$ augmented by one vertex label.
Let $C_A, \bar{C_A}$ and $P_A$ be the graph properties defined by 
$C_A =\{K_n: n \in A\}$,
$\bar{C_A} =\{E_n: n \in A\}$,
 and
$P_A =\{P_n: n \in A\}$, where
$E_n$  is the complement graph of the clique $K_n$ of size $n$, and $P_n$ is a path graph of size $n$.
\begin{enumerate}
\item
$H(C_A, \sqcup)$ has finite rank for all $A \subseteq \N$.
\item
For two graphs $G_1, G_2$, let $G_1 \sqcup^c G_2$ be the sum-like operation defined as the loopless complement graph of $G_1 \sqcup G_2$.
\\
$H(C_A, \sqcup^c)$ has infinite rank for all 
$A \subseteq \N$ which are not ultimately periodic.
\\
Equivalently, for the $\tau_{graphs}$-property $\bar{C_A}$, the Hankel matrix $H(\bar{C_A}, \sqcup)$ 
has infinite rank for all $A \subseteq \N$ which are not ultimately periodic.
\item
$H(P_A,\Box)$ has finite rank for all sum-like operations $\Box$ on $\tau_{graphs}$-structures and all $A \subseteq \N$.
\item
For two graphs $G_1, G_2$ with one vertex label, 
i.e. $\tau_1$-structures, let $G_1 \sqcup^1 G_2$ be the sum-like operation defined as the graph resulting from $G_1 \sqcup G_2$
by adding an edge between the two labeled vertices and then
removing the labels.
$H(P_A,\sqcup^1)$ has infinite rank for all 
$A \subseteq \N$ which are not ultimately periodic.
\item
$H(C_A, \sqcup_k)$ has finite rank for all $A \subseteq \N$.
\end{enumerate}
\end{thm}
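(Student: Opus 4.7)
The plan is to treat the five parts separately, using \autoref{prop:index} to move between rank and index and invoking \autoref{le:periodic} for the two infinite-rank claims. For (i), if both $\fA$ and $\fB$ are nonempty then $\fA\sqcup\fB$ contains two non-adjacent vertices and so cannot be a clique; hence the only nonzero rows of $H(C_A,\sqcup)$ are the row indexed by the empty graph (with $1$s in columns $K_n$, $n\in A$) and the rows indexed by $K_n$ with $n\in A$ (each carrying a single $1$ in the empty column), yielding at most two distinct row patterns. For (ii) I would restrict $H(\bar{C_A},\sqcup)$ to the rows and columns labeled by the edgeless graphs $E_n$: since $E_n\sqcup E_m=E_{n+m}$ this submatrix is precisely the $M_A$ of \autoref{le:periodic}, and the reformulation for $\sqcup^c$ uses the same submatrix via $E_n\sqcup^c E_m=K_{n+m}$. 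For (iv), after labeling one endpoint of each $P_n$ so that it becomes a $\tau_1$-structure, $\sqcup^1$ glues two such labeled paths along the new edge and yields $P_{n+m}$; restricting to these rows and columns carves out $M_A$ once more, giving infinite rank whenever $A$ is not ultimately periodic.

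For (iii), I would first invoke the finiteness proposition of \autoref{subs:sumlike}, which tells us that $\tau_{graphs}$ admits only finitely many sum-like operations, reducing matters to a case analysis. Since $\tau_{graphs}$ contains only the symmetric loop-free binary relation $E$ and no constants or unary markers, every symmetric quantifier-free edge formula $\phi_E(x,y)$ is logically equivalent to a Boolean combination of $E(x,y)$ and $x=y$, leaving up to equivalence essentially four cases: $\sqcup$, its loopless complement, the constantly empty and the constantly full edge operations. The key observation is that for $x\in\fA$ and $y\in\fB$ on opposite sides of the disjoint union every atom of $\phi_E(x,y)$ evaluates to \emph{false}, so $\Box$ either uniformly adds all cross edges or none. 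In the ``no cross edges'' regime the output is the disjoint union of the within-transductions of $\fA$ and $\fB$ and can be a path only when one of $|\fA|,|\fB|$ is $0$ (and even stricter when the within-transduction is edgeless); in the ``all cross edges'' regime every $\fA$-vertex has degree at least $|\fB|$ in $\fA\Box\fB$, forcing $|\fA|,|\fB|\leq 2$. Either way the $1$-entries occupy only finitely many row and column isomorphism types, so $r(P_A,\Box)$ is finite.

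For (v), the crucial structural fact is that $\sqcup_k$ creates no edge between a non-constant vertex of $\fA$ and a non-constant vertex of $\fB$, so $\fA\sqcup_k\fB$ can be a clique only if at least one summand has size exactly $k$. I would then argue that the row of a $\sigma$-structure $\fA$ in $H(C_A,\sqcup_k)$ is determined by a bounded amount of data, namely the edge set induced on the $k$ constants of $\fA$, the qualitative information whether the non-constants of $\fA$ form a clique fully joined to every constant, and whether $|\fA|\in A$; since there are only $2^{\binom{k}{2}}$ possible constant-induced edge patterns, the number of distinct nonzero row types is bounded. The main obstacle I anticipate lies in (iii) and (v): in (iii) one must spell out the quantifier-free formalism carefully and verify that the ``all cross edges'' case cannot sneak in a path via some unexpected within-transduction; in (v) the two regimes $|\fA|=k$ and $|\fA|>k$ yield differently shaped rows and one needs to check that both stay inside the finite catalogue parameterized by subsets of $\binom{[k]}{2}$.
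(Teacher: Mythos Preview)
Your proposal is correct and mirrors the paper's argument closely for parts (i), (ii), and (iv). For (iii) the paper argues via the single observation that for any sum-like $\Box$ on $\tau_{graphs}$-structures, $G \Box H = P_n$ with $n \geq 3$ forces one of $G, H$ to be empty; your explicit cross-edge dichotomy (no cross edges versus all cross edges, with the degree bound $|\fA|,|\fB|\leq 2$ in the latter case) reaches the same finite-rank conclusion and is in fact the more careful formulation, since the paper's observation as literally stated fails at $n=3$ under $\sqcup^c$ (e.g.\ $K_2 \sqcup^c E_1 = P_3$), though this only affects boundedly many entries and does not threaten the result. For (v) the paper offers only the one-line remark that $k$-sums of two sufficiently large structures never produce a clique; your parameterization of rows by the edge pattern on the $k$ constants together with the two Boolean flags is a legitimate way to make the row-type bound explicit.
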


\autoref{th:1}  needs an interpretation:
(i) says that there is a specific sum-like operation $\Box$ such that there uncountably many classes of $\tau$-structures
with finite  $\Box$-rank\footnote{
A similar construction was first suggested by E. Specker in conversations with the second author in 2000,
cf. \cite[Section 7]{ar:MakowskyTARSKI}.}.
(ii) says that if a class has finite $\Box$-rank for one sum-like operation, it does not have to hold for
all sum-like operations\footnote{
This observation was suggested by T. Kotek in conversations with the second author in summer 2014.}.
(iii) produces uncountably many classes  of $\tau$-structures which have finite  $\Box$-rank for all sum-like operations
on $\tau$-structures.
(iv) finally shows that such classes can still have infinite $\Box$-rank for sum-like operations
which take as inputs $\sigma$-structures (labeled paths) and output a $\tau$-structure (unlabeled paths).
This leads us to the following definition:

\begin{defi}
Let $\tau$ be a vocabulary and $\Phi$ be a $\tau$-property.
\begin{enumerate}
\item
$\Phi$ has finite S-rank (P-rank, C-rank) if for every sum-like (product-like, connection-like) operation 
$\Box: \Str(\sigma) \times \Str(\sigma) \rightarrow \Str(\tau)$ 
the Boolean rank of $H(\Phi,\Box)$  is finite.
\item
A nice logic $\cL$ has finite S-rank (P-rank, C-rank) iff all its definable properties have finite S-rank (P-rank, C-rank).
\end{enumerate}
\end{defi}

\begin{examples}
\
\begin{enumerate}
\item
(\cite{ar:GodlinKotekMakowsky08}):
$\FOL$ and $\CFOL$
have finite S-rank, C-rank and P-rank.
\item
(\cite{ar:GodlinKotekMakowsky08}):
$\MSOL$ and $\CMSOL$ have finite S-rank and C-rank.
\item
The examples $C_A, P_A$ above do not have finite S-rank.
\end{enumerate}
\end{examples}

\subsection{Proof of \autoref{th:1}}
\begin{proof}
(i) The disjoint union of two graphs is never connected. Therefore all the entries of $H(C_A \sqcup)$ are zero,
unless we consider the empty graph to be structure. In this case we have exactly one row and one column representing $C_A$.
In any case, the rank is $\leq 2$.
\\
(ii) Consider the submatrix of $H(C_A, \sqcup)$ consisting of rows and columns labeled  with the
edgeless graphs $E_n$ and use \autoref{le:periodic}.
\\
(iii)
We first observe that 
\begin{quote}
(*)
for any sum-like operation $\Box$ on $\tau_{graphs}$-structures (i.e., graphs), $G$ and $H$,
if $G \Box H = P_n$ for $n \geq 3$, either $G$ or $H$ must be the empty graph.
\end{quote}
This is due to the fact that  $\tau_{graphs}$ has no constant symbols.
Therefore, a row or column containing non-zero entries must be labeled by the empty graph.
\\
(iv) 
Here  we consider $(\sigma,\tau)$-translation schemes for sum-like operations,  with $\sigma =  \tau_{graphs} \cup \{a\}$.
Hence (*) from the proof of (iii)  is not true anymore because now $P_{m+n+1}$ can be obtained from
$P_n$ and $P_m$ with the $a$ being an end vertex, using $\sqcup^1$.
So we apply \autoref{le:periodic}.
\\
(v) Connection operations of two large enough cliques still produce connected graphs, but never form a clique.
\end{proof}

\subsection{Properties of Finite S-rank and Finite P-rank}

Let 
$\mathcal{S}(\tau)$  and
$\mathcal{P}(\tau)$ 
denote the collection of all $\tau$-properties of finite S-rank and finite P-rank respectively,
and let $\mathcal{S} = \bigcup_{\tau} \mathcal{S}(\tau)$ and
$\mathcal{P} = \bigcup_{\tau} \mathcal{P}(\tau)$. 

\begin{thm}
\label{th:nadia}
$\mathcal{S}$  and
$\mathcal{P}$  and
are abstract Lindstr\"om logics which have finite S-rank  and finite  P-rank, respectively.
\end{thm}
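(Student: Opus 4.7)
The finite S-rank (resp. P-rank) of $\mathcal{S}$ (resp. $\mathcal{P}$) is immediate from the definitions, so the real content is closure under the operations defining an abstract Lindstr\"om logic in the sense of \cite{ar:Lindstr2}: isomorphism (automatic), Boolean combinations, renamings, reducts, relativization, substitution along translation schemes, and quantification over a relation symbol. Throughout I would work with the equivalence-class reformulation of \autoref{prop:index}, since equivalences behave far more transparently than ranks over $\bZ_2$ under the required operations.

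For Boolean closure, $H(\neg\Phi, \Box)$ differs from $H(\Phi, \Box)$ by the rank-$1$ all-ones matrix, so $r(\neg\Phi, \Box) \le r(\Phi, \Box) + 1$; and $\equiv_{\Phi \cap \Psi, \Box}$ is a coarsening of the common refinement $\equiv_{\Phi, \Box} \wedge \equiv_{\Psi, \Box}$, whose index is bounded by the product of the indices of $\Phi$ and $\Psi$ (union is symmetric). For renamings, reducts, relativization, and substitution along any quantifier-free scalar translation scheme $T : \sigma \to \tau$, the key observation is that quantifier-free scalar transductions compose: if $\Phi \in \mathcal{S}(\tau)$ and $\Box'$ is sum-like with target $\sigma$, then $(\fA, \fB) \mapsto T^{\star}(\fA \Box' \fB)$ is again sum-like with target $\tau$, and the two Hankel matrices coincide entrywise, $H((T^{\star})^{-1}\Phi, \Box') = H(\Phi, T^{\star} \circ \Box')$. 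Hence finite S-rank transfers to the pullback. The argument for $\mathcal{P}$ is identical using vectorized quantifier-free transductions.

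The main obstacle I anticipate is closure under existential quantification over a relation symbol $R$, since this is not realized by any quantifier-free transduction and breaks the clean composition argument above. My plan is to show directly that $\equiv_{\mathrm{Proj}_R \Phi, \Box'}$ has finite index by a type-refinement: to each $\fA$ assign the set of equivalence classes $[(\fA, A_R)]_{\Phi, \tilde\Box}$ realized as $A_R$ ranges over all interpretations of $R$ on $\fA$, where $\tilde\Box$ is a sum-like lift of $\Box'$ to the expanded vocabulary. Since $\Phi$ has finite $\tilde\Box$-index, this type lives in a finite power set and the refinement has finite index. The delicate step is that a general interpretation of $R$ on $\fA \Box' \fB$ may involve ``cross-tuples'' drawing elements from both sides, so the single lift $\tilde\Box$ must be replaced by a finite family of lifts indexed by the possible patterns of cross-tuples; one then has to verify that only finitely many such patterns are relevant up to $\Phi$-equivalence. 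Once this is established, the same type-refinement blueprint extends to arbitrary generalized quantifiers whose interpretations already lie in $\mathcal{S}$, completing the verification.
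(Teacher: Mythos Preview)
Your outline follows the same strategy as the paper, which itself gives only a two-line sketch: impose the canonical generalized-quantifier syntax of \cite{ar:Lindstr2},\cite{JAM-phd} and then carry out ``a tedious induction,'' with all details deferred to \cite{up:LabaiMakowskyFV}. What you have written is precisely a first pass at that induction, and in fact supplies more detail than the paper does. Your treatment of Boolean closure and of pullback along quantifier-free scalar transductions via the identity $H((T^\star)^{-1}\Phi,\Box')=H(\Phi,T^\star\circ\Box')$ is correct and is the natural argument.

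One misalignment is worth flagging: existential quantification over a \emph{relation} symbol is second-order projection and is not among the defining closure operations of an abstract Lindstr\"om logic in the sense of \cite{ar:Lindstr2} or of this paper. What is required is first-order quantification, i.e.\ projection over a \emph{constant} symbol $c$, and for that your cross-tuples obstacle evaporates: an interpretation of $c$ on $\fA\Box'\fB$ is a single element lying on one side or the other, so the lift $\tilde\Box$ splits into two honest sum-like operations and your type-refinement goes through without the caveat. The step you call ``delicate'' is therefore not needed for the theorem as stated. Where the induction genuinely becomes subtle is the canonical-syntax step of closing under the Lindstr\"om quantifiers $Q_\Phi$ for $\Phi\in\mathcal{S}$, which amounts to pullback along translation schemes whose component formulas are arbitrary $\mathcal{S}$-formulas rather than quantifier-free ones; there your composition trick no longer applies directly, and the assertion that ``only finitely many patterns are relevant up to $\Phi$-equivalence'' is essentially what has to be proved rather than an available lemma. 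The paper does not resolve this either and simply defers it.
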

\begin{proof}[Sketch of proof:]
One first gives 
$\mathcal{S}$  and
$\mathcal{P}$ 
a canonical syntax as described in \cite{ar:Lindstr2},\cite{JAM-phd}.
The proof then is a tedious induction which will be published elsewhere.
\end{proof}

It is unclear whether the abstract Lindstr\"om logic $\mathcal{S}$ goes beyond $\CMSOL$.
As of now, we were unable to find a $\tau$-property which has finite S-rank, but is not definable in $\CMSOL$.

\begin{oproblem}
\ 
\begin{enumerate}
\item
Is every $\tau$-property with finite S-rank definable in $\CMSOL(\tau)$?
\item
Is every $\tau$-property with finite P-rank definable in $\CFOL(\tau)$?
\end{enumerate}
\end{oproblem}

It seems to us that the same can be shown for connection-like operations, but we have not yet checked the details.

\section{Hankel matrices and the Feferman-Vaught theorem}
\label{se:fv}
\subsection{The FV-property}
In this section we look at nice Lindstr\"om logics with a fixed quantifier rank.
We use it to derive from the classical Feferman-Vaught theorem an abstract version involving the quantifier rank.
This differs from the treatment in \cite[Chapter xviii]{bk:BF}.
Our purpose is to investigate the connection between Hankel matrices of finite rank and the
Feferman-Vaught Theorem  on finite structures in an abstract setting.

\begin{defi}
\label{def:fv}
Let $\mathcal{L}$ be a nice logic with quantifier rank $\rho$.
\begin{enumerate}
\item
We denote by $\cL(\tau)^q$ the set of $\cL(\tau)$-sentences $\phi$ (without free variables) with
$\rho(\phi) =q$.
\item
Two $\tau$-structures $\fA, \fB$ are $\mathcal{L}^q$ equivalent, $\fA \sim_{\mathcal{L}}^q \fB$,
if for every $ \phi \in \mathcal{L}(\tau)^q$
we have $\fA \models \phi$ iff $\fB \models \phi$.
\item
$\mathcal{L}$ has the FV-property for $\Box$ with respect to $\rho$ if for every $\phi \in \mathcal{L}(\tau)^q$
there are
$k=k(\phi) \in \N$, $\psi_1, \ldots, \psi_{k} \in \cL(\tau)^q$ and $B_{\phi} \in 2^{2k}$ such that
for all $\tau$-structures $\fA,\fB$ we have that
$$
\fA \Box \fB \models \phi
$$
iff
$$
B_{\phi}(\psi_1^A, \ldots \psi_k^A,\psi_1^B, \ldots \psi_k^B) =1
$$
where for $1 \leq j \leq k$ we have $\psi_j^A =1$ iff $\fA \models \psi_j$
and $\psi_j^B =1$ iff $\fB \models \psi_j$.
\item
$\Box$ is $\mathcal{L}$-smooth with respect to $\rho$ if for every two pairs of $\tau$-structures
$\fA_1, \fA_2, \fB_1, \fB_2$  with 
$\fA_1 \sim_{\mathcal{L}}^q \fA_2$
and
$\fB_1 \sim_{\mathcal{L}}^q \fB_2$
we also have
$\fA_1 \Box \fB_1 \sim_{\mathcal{L}}^q  \fA_2 \Box \fB_2$.
\end{enumerate}
If $\rho$ is clear from the context we omit it.
\end{defi}
A close inspection of the classical proofs shows that the requirements concerning
the quantifier rank are satisfied in the following cases.
\vskip1cm
\begin{examples}
\ 
\begin{itemize}
\item
(\cite{ar:FefermanVaught}):
$\FOL$ has the FV-property for all product-like  and connection-like operations $\Box$.
\item
(\cite{ar:KotekMakowsky-LMCS2014}):
$\CFOL$ with quantifier rank $\rho_2$ has the FV-property for all product-like and connection-like operations $\Box$.
\item
(\cite{ar:gurevich79},\cite{pr:laeuchli68},\cite{ar:shelah75}):
$\MSOL$ has the FV-property for all sum-like  and connection-like operations $\Box$.
\item
(\cite{ar:courcelle90}):
$\CMSOL$ with quantifier rank $\rho_2$ has the FV-property for all sum-like and connection-like operations $\Box$.
\end{itemize}
\end{examples}
\subsection{The FV-property and Finite Rank}
\begin{defi}
Let $\cL$ be a nice logic.
\begin{enumerate}
\item
Let $\Box$ be a binary operation on $\tau$-structures.
$\cL$ is $\Box$-closed if 
all the equivalence classes of $\equiv_{\phi, \Box}$ are definable
in $\cL(\tau)$.
\item
$\cL$ is S-closed (P-closed, C-closed) if for every sum-like (product-like, connection-like) binary operation $\Box$ the logic
$\cL$ is $\Box$-closed.
\end{enumerate}
\end{defi}

\begin{prop}
\label{prop:fv-1}
Let $\mathcal{L}$ have the FV-property for $\Box$. 
\begin{enumerate}
\item
$\Box$ is $\mathcal{L}$-smooth.
\item
Let $\Phi$ be a $\tau$-property
definable by a formula $\phi \in \mathcal{L}(\tau)^q$.
Then each equivalence class $[\fA]_{\Phi,\Box}$ of $\equiv_{\Phi,\Box}$
is definable by a formula $\psi(\fA) \in  \mathcal{L}(\tau)^q$.
\item
If $\cL$ has the FV-property for all sum-like (product-like) operations
then $\cL$ is S-closed (P-closed).
\end{enumerate}
\end{prop}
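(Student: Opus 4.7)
The plan is to derive all three statements from a single decomposition supplied by the FV-property. Given $\phi \in \cL(\tau)^q$, pick once and for all the witnesses $k$, $\psi_1, \ldots, \psi_k \in \cL(\tau)^q$ and $B_\phi : \{0,1\}^{2k} \to \{0,1\}$ guaranteed by \autoref{def:fv}, and associate with each $\tau$-structure $\fC$ its profile $P_\fC \in \{0,1\}^k$ defined by $(P_\fC)_j := \psi_j^\fC$. Then by the FV-property, $\fA \Box \fB \models \phi$ is a function of $(P_\fA, P_\fB)$ alone.

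For (i), fix an arbitrary $\phi \in \cL(\tau)^q$ and apply this decomposition. Since each $\psi_j$ lies in $\cL(\tau)^q$, the hypotheses $\fA_1 \sim_{\cL}^q \fA_2$ and $\fB_1 \sim_{\cL}^q \fB_2$ force $P_{\fA_1} = P_{\fA_2}$ and $P_{\fB_1} = P_{\fB_2}$. Hence $B_\phi$ is fed identical arguments in both evaluations, so $\fA_1 \Box \fB_1 \models \phi$ iff $\fA_2 \Box \fB_2 \models \phi$. As $\phi \in \cL(\tau)^q$ was arbitrary, smoothness follows.

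For (ii), apply the same decomposition to the defining formula $\phi$ of $\Phi$. Since $\fA \Box \fC \in \Phi$ depends only on $(P_\fA, P_\fC)$, the relation $\fA \equiv_{\Phi,\Box} \fB$ is equivalent to the purely combinatorial condition $B_\phi(P_\fA, P_\fC) = B_\phi(P_\fB, P_\fC)$ holding for every realizable $P_\fC$; this defines an equivalence relation $\sim$ on $\{0,1\}^k$, and $[\fA]_{\Phi,\Box}$ is the union of profile classes $\sim$-equivalent to $P_\fA$. I then set
\[
\chi_P \;:=\; \bigwedge_{j:\,P_j=1} \psi_j \;\wedge\; \bigwedge_{j:\,P_j=0} \neg\psi_j, \qquad \psi(\fA) \;:=\; \bigvee_{P \sim P_\fA} \chi_P,
\]
so that $\fB \models \psi(\fA)$ iff $P_\fB \sim P_\fA$ iff $\fB \in [\fA]_{\Phi,\Box}$. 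Because this is a finite Boolean combination (at most $2^k$ profiles) of formulas in $\cL(\tau)^q$, the closure axiom that Boolean operations preserve the maximal q-rank places $\psi(\fA)$ in $\cL(\tau)^q$.

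Part (iii) is then immediate from (ii): by definition, $\cL$ is S-closed (resp.\ P-closed) precisely when it is $\Box$-closed for every sum-like (resp.\ product-like) $\Box$, and for each such $\Box$ the full hypothesis of (iii) lets me invoke (ii) to define every equivalence class of $\equiv_{\Phi,\Box}$ in $\cL(\tau)$, for every $\cL$-definable $\Phi$. The one genuinely delicate point in the argument is the q-rank tracking in (ii): I must invoke the Boolean-closure axiom to keep $\psi(\fA)$ at q-rank at most $q$, and if $\cL(\tau)^q$ is read as `q-rank exactly $q$', pad with a tautological conjunct of q-rank $q$ to meet the exact bound.
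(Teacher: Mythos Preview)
Your proof is correct and follows essentially the same approach as the paper's sketch: both use the FV-decomposition to pass from structures to their $\{0,1\}^k$-profiles over the $\psi_j$'s, observe that $\equiv_{\Phi,\Box}$ is determined by these profiles, and then write the defining formula of $[\fA]_{\Phi,\Box}$ as a Boolean combination of the $\psi_j$'s. Your handling of the quantification over $\fC$ via \emph{realizable} profiles is in fact slightly more careful than the paper's passage to ``$\forall X_1,\ldots,X_k$'', and your explicit construction of $\chi_P$ and $\psi(\fA)$ makes the q-rank bookkeeping transparent.
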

\begin{proof}[Sketch of proof]
(i) Follows because  for $i =1,2$, the truth value of
$\fA_i \Box \fB_i \models \phi \in \mathcal{L}(\tau)^q$ depends only on
$B_{\phi}$, the Boolean function associated with the FV-property.

(ii) Fix a $\tau$-structure $\fA$.
We want to show that $[\fA]_{\Phi,\Box}$ is definable by some formula $\psi(\fA) \in \mathcal{L}(\tau)^q$.

$\fB \in [\fA]_{\Box,\Phi}$ iff for all $\fC$, 
$\fA \Box \fC \in \Phi$ iff
$\fB \Box \fC \in \Phi$. 
\\
We have, using $B_{\phi}$, that
$$
\fA \equiv_{\Phi,\Box} \fB 
$$
iff for all $\fC$,
\begin{equation}
B_{\phi}( \psi_1^A, \ldots , \psi_k^A, \psi_1^C, \ldots , \psi_k^C)
=
B_{\phi}( \psi_1^B, \ldots , \psi_k^B, \psi_1^C, \ldots , \psi_k^C)
\end{equation}
iff
$\forall X_1, \ldots , X_k \in \{0,1\}$,
\begin{equation}
\label{eq:1}
B_{\phi}( \psi_1^A, \ldots , \psi_k^A, X_1, \ldots, X_k)
=
B_{\phi}( \psi_1^B, \ldots , \psi_k^B, X_1, \ldots, X_k)
\end{equation}
where $\psi_i^A$, 
$\psi_i^B$ and  $\psi_i^C$ are as in Definition \ref{def:fv}(iii).
Equation (\ref{eq:1}) can be expressed by a formula $\psi(\fA) \in \mathcal{L}(\tau)^q$.

(iii) Follows from (ii).
\end{proof}

By analyzing the proof in \cite{ar:GodlinKotekMakowsky08}, one can prove:
\begin{thm}
\label{th:fv-2}
Let $\cL$ be a nice  Lindstr\"om logic with quantifier rank $\rho$ and $\Box$ be a binary  operation on $\tau$-structures.
If $\Box$ is $\cL$-smooth with respect to $\rho$,
then every $\cL$-definable $\tau$-property $\Phi$ has finite $\Box$-rank.
\end{thm}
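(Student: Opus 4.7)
The plan is to show that $\Phi$ has finite $\Box$-index and then invoke \autoref{prop:index}. Since $\Phi$ is $\cL$-definable, fix $\phi \in \cL(\tau)$ with $\Phi = \{\fA : \fA \models \phi\}$ and set $q = \rho(\phi)$. Consider the equivalence relation $\sim_{\cL}^q$ on $\Str(\tau)$. Because $\cL$ is a nice logic, clause (iii) of the definition of nice quantifier rank (applied with the empty set of free variables) guarantees that, up to logical equivalence, $\cL(\tau)^q$ contains only finitely many sentences; consequently $\sim_{\cL}^q$ has only finitely many equivalence classes on $\Str(\tau)$.

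Next I would use the $\cL$-smoothness of $\Box$ to argue that $\sim_{\cL}^q$ refines $\equiv_{\Phi,\Box}$. Suppose $\fA_1 \sim_{\cL}^q \fA_2$. For any $\tau$-structure $\fC$, taking $\fB_1 = \fB_2 = \fC$ in the definition of smoothness yields $\fA_1 \Box \fC \sim_{\cL}^q \fA_2 \Box \fC$. Since $\phi \in \cL(\tau)^q$, this forces $\fA_1 \Box \fC \models \phi$ iff $\fA_2 \Box \fC \models \phi$, i.e., $\fA_1 \Box \fC \in \Phi$ iff $\fA_2 \Box \fC \in \Phi$. Hence $\fA_1 \equiv_{\Phi,\Box} \fA_2$, so $\equiv_{\Phi,\Box}$ has at most as many classes as $\sim_{\cL}^q$, namely finitely many. \autoref{prop:index} then delivers finite $\Box$-rank.

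The only subtlety worth flagging is making sure that niceness really delivers finiteness of $\sim_{\cL}^q$ on sentences, which reduces to invoking condition (iii) with zero free variables; beyond that, no real obstacle stands in the way. The content of the theorem is essentially that $\cL$-smoothness is the right abstract surrogate for a Feferman--Vaught-style hypothesis needed to bound the rank of Hankel matrices, and the proof amounts to unpacking the definitions in the correct order: nice $\Rightarrow$ finitely many $\sim_{\cL}^q$-classes, smooth $\Rightarrow$ $\sim_{\cL}^q$ refines $\equiv_{\Phi,\Box}$, \autoref{prop:index} $\Rightarrow$ finite rank.
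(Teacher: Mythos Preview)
Your proof is correct and follows essentially the same approach as the paper's: both use niceness to bound the number of $\sim_{\cL}^q$-classes (the paper phrases this via Hintikka sentences), then use $\cL$-smoothness to show that $\sim_{\cL}^q$-equivalent structures have identical rows in $H(\Phi,\Box)$. The only cosmetic difference is that the paper bounds the rank directly by the number of Hintikka sentences, whereas you pass through finite $\Box$-index and invoke \autoref{prop:index}; these are equivalent since identical rows is precisely the characterization used in the proof of \autoref{prop:index}.
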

\begin{proof}[Sketch of proof]
Let $\Phi$ be definable by $\phi$ with quantifier rank $\rho(\phi)=q$.
Now let $\phi_i: i \leq \alpha \in \N$ be an enumeration of maximally consistent $\cL(\tau)^q$-sentences (aka Hintikka sentences).
By our assumption $\rho$ is nice, so this is a finite set.
Furthermore $\phi$ is logically equivalent to a disjunction  $\bigvee_{i \in I} \phi_i$ with $I \subseteq [\alpha]$,
any every $\tau$-structure satisfies exactly one $\phi_i$. 
\\
Now we use the  smoothness of $\Box$. 
If $\fA,\fB$ are two $\tau$-structures satisfying the same $\phi_i$, then their rows (columns) in $H(\Phi,\Box)$ are identical.
Hence the rank of $H(\Phi,\Box)$ is at most $\alpha$, or $\alpha +1$ when empty $\tau$-structures are allowed.
\end{proof}

Combining \autoref{th:fv-2} with \autoref{prop:fv-1}(i) we get:
\begin{coro}
Let $\mathcal{L}$ be a nice Lindstr\"om logic which has the FV-property
for the binary operation $\Box$, and
let $\Phi$ be definable in $\mathcal{L}$.
Then $r(\Phi, \Box)$  is finite.
\end{coro}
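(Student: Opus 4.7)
The plan is essentially to chain together the two preceding results, since the corollary is advertised as their combination. Given $\mathcal{L}$ with the FV-property for $\Box$ and a $\tau$-property $\Phi$ definable by some $\phi \in \mathcal{L}(\tau)$ with $\rho(\phi)=q$, I would proceed in two clean steps with no calculation of my own.

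First I would invoke \autoref{prop:fv-1}(i): the hypothesis that $\mathcal{L}$ has the FV-property for $\Box$ immediately gives that $\Box$ is $\mathcal{L}$-smooth with respect to $\rho$. Intuitively, this is because the truth value of $\fA \Box \fB \models \phi$ is, by the FV-property, determined by the Boolean function $B_\phi$ evaluated on the truth values $\psi_i^{\fA}, \psi_i^{\fB}$ of the finitely many $\mathcal{L}(\tau)^q$-witnesses, so replacing $\fA, \fB$ by $\sim_{\mathcal{L}}^q$-equivalent structures does not change the outcome.

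Second I would apply \autoref{th:fv-2} directly: $\mathcal{L}$ is nice, $\Box$ is now known to be $\mathcal{L}$-smooth with respect to $\rho$, and $\Phi$ is $\mathcal{L}$-definable by $\phi$, so $r(\Phi, \Box)$ is finite. Since \autoref{th:fv-2} bounds the Boolean rank by the number of Hintikka sentences of q-rank $q$ (plus possibly one for the empty structure), niceness is what makes this bound finite; it is the only place niceness is used in the chain.

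There is essentially no obstacle here, as both inputs have already been established in the excerpt: the corollary is a formal restatement of \autoref{prop:fv-1}(i) $\Rightarrow$ \autoref{th:fv-2}. The only minor subtlety to flag is that one must ensure the quantifier rank $\rho$ witnessing the FV-property in the hypothesis is the same $\rho$ used in the smoothness definition and in \autoref{th:fv-2}; this is harmless because \autoref{def:fv} fixes one $\rho$ throughout, but it is worth writing the q-rank of $\phi$ explicitly as $q$ and keeping it fixed across the two invocations.
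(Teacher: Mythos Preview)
Your proposal is correct and matches the paper's own approach exactly: the corollary is stated immediately after \autoref{th:fv-2} with the remark that it follows by combining \autoref{th:fv-2} with \autoref{prop:fv-1}(i), and no further argument is given. Your two-step chain and the side remark about keeping the quantifier rank fixed are precisely what is intended.
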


\ifskip
\else
\begin{prop}
\label{prop:local}
Let $\mathcal{L}$ be a nice logic with quantifier rank $\rho$. 
\begin{enumerate}
\item
Let $\Box$ be a fixed operation on $\tau$-structure such that  
\begin{enumerate}
\item
$\Box$ is associative, and
\item
for every $\phi \in \mathcal{L}(\tau)$ the rank of $H(\phi, \Box)$ is finite, and
\item
all equivalence classes of $\equiv_{\phi,\Box}$ are definable by formulas of $\mathcal{L}$
with quantifier rank $ \leq qr(\phi)$.
\end{enumerate}
Then $\mathcal{L}$ has the FV-property for $\Box$.
\item
If $\cL$ 
has finite S-rank then the S-closure of $\cL$ has the FV-property for all sum-like operations.
\end{enumerate}
\end{prop}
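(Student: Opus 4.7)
The plan for part (i) is to extract the FV-reduction directly from the finite rank of $H(\phi, \Box)$. Given $\phi \in \cL(\tau)^q$ defining a $\tau$-property $\Phi$, condition (b) together with \autoref{prop:index} yields finitely many $\equiv_{\phi,\Box}$-equivalence classes $[\fA_1], \ldots, [\fA_m]$, and by condition (c) each is defined by a formula $\psi_i \in \cL(\tau)$ with $\rho(\psi_i) \leq q$. Since the Hankel rank is a two-sided invariant (row rank equals column rank over $\bZ_2$), there are also finitely many ``right'' equivalence classes $[\fB_1], \ldots, [\fB_n]$, and I would argue these too are definable by formulas of q-rank at most $q$; this is the step where associativity enters, as it lets one translate a right-multiplication condition $\fC \Box \fB \models \phi$ into a left-multiplication condition on a suitably modified structure whose classes are again controlled by (c). With both sides handled, take $\psi_1, \ldots, \psi_k$ to be the union of the left- and right-defining formulas (padded up to q-rank $q$ if needed) and define $B_\phi$ by: given the truth vectors of the $\psi_i$ on $\fA$ and on $\fB$, identify the unique left-class index $i$ for $\fA$ and right-class index $j$ for $\fB$, and output the constant value $[\fA_i \Box \fB_j \models \phi]$. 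Soundness is then immediate, since $\fA \Box \fB \models \phi$ iff $\fA_i \Box \fB \models \phi$ (by left equivalence) iff $\fA_i \Box \fB_j \models \phi$ (by right equivalence).

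The principal obstacle I expect is precisely this definability of the right-equivalence classes, because condition (c) as stated refers to $\equiv_{\phi,\Box}$ only in the one-sided form used in \autoref{se:hankel}. My first attempt would be to show, under associativity, that the right-equivalence is a boolean refinement of finitely many instances of the left-equivalence applied to the ``plugged-in'' formulas $\psi_i$, so that definability propagates from left to right through boolean combinations without raising the q-rank. If this fails in full generality, a fallback valid for the intended application in part (ii) is to use that the base operation $\sqcup$ is commutative, so left and right equivalences coincide and the issue evaporates. A more ambitious route is to upgrade (c) to an explicitly two-sided formulation and argue symmetrically via the transpose Hankel matrix, which has the same rank.

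For part (ii), I apply (i) to the S-closure of $\cL$, which is constructed in \autoref{se:sclosure} precisely so that for every sum-like $\Box$ and every formula $\phi$ the classes of $\equiv_{\phi,\Box}$ become definable, supplying condition (c). Finite S-rank of $\cL$ transfers to its S-closure because the Hankel rank of a boolean combination of formulas is bounded in terms of the ranks of the components, and the closure is obtained by controlled augmentation that preserves this bound; this delivers condition (b). Associativity for sum-like operations comes from the associativity of $\sqcup$ carried through a quantifier-free transduction, modulo the bookkeeping for constants discussed in \autoref{subs:sumlike}. Hence the hypotheses of (i) are met, and the S-closure has the FV-property for every sum-like operation.
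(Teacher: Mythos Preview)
The paper does not actually prove \autoref{prop:local}: both the two--part version you were given (which sits inside an \verb|\ifskip...\fi| block and is suppressed in the compiled paper) and the single--part version that replaces it are stated without proof, and the surrounding text defers details to the forthcoming \cite{up:LabaiMakowskyFV}. So there is no proof in the paper to compare against; what follows is an assessment of your argument on its own merits.

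Your strategy for (i) is the natural one and matches what one expects the authors intend: finite rank yields finitely many $\equiv_{\phi,\Box}$--classes (\autoref{prop:index}), condition (c) makes each class definable at q--rank $\leq q$, and $B_\phi$ is read off the finite quotient matrix. You are also right that the delicate point is the \emph{second} coordinate. The paper's $\equiv_{\Phi,\Box}$ is one--sided (it fixes the first argument and varies $\fC$ on the right), so condition (c) literally gives definability of the row--classes only; to build $B_\phi$ you also need the column--classes, and your proposal does not close this gap. The suggested fix---use associativity to show that right--equivalence is a Boolean combination of left--equivalences for the ``plugged--in'' $\psi_i$---is not substantiated: associativity lets you rewrite $(\fA\Box\fB)\Box\fC$ as $\fA\Box(\fB\Box\fC)$, but the question is whether $\fA_i\Box\fB\models\phi$ is determined by the left--classes of $\fB$, and that rewriting does not obviously deliver this without a unit or commutativity. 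Your commutative fallback is sound for $\sqcup$ itself, but it does not cover arbitrary sum--like $\Box$. One route that does work is to observe that the transposed operation $\fA\,\Box^{T}\fB:=\fB\Box\fA$ is again sum--like (swap the roles of the two copies in the disjoint union before applying the transduction), so that condition (c) applied to $\Box^{T}$ yields definability of the column--classes directly; you should make this explicit rather than appeal to associativity.

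For (ii) there is a second gap: your claim that ``associativity for sum--like operations comes from the associativity of $\sqcup$'' is false in general. A sum--like $\Box$ need not even have matching source and target vocabularies, and when it does, post--composing $\sqcup$ with a quantifier--free transduction can easily destroy associativity. Hence you cannot simply invoke part (i) for every sum--like $\Box$. (The paper's own use of \autoref{prop:local} in the proof of \autoref{th:fv-main} glosses over the same hypothesis, so this is arguably a looseness inherited from the source; but your write--up should not import it.) The cleaner line for (ii) is again to bypass associativity: once you have definability of both row-- and column--classes at bounded q--rank via the $\Box$/$\Box^{T}$ trick, the FV--reduction follows for each sum--like $\Box$ directly, and the S--closure is designed precisely to supply that definability.
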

\fi

\begin{prop}
\label{prop:local}
Let $\mathcal{L}$ be a nice logic with quantifier rank $\rho$ 
and $\Box$ be a fixed operation on $\tau$-structure, which is associative.
Assume further that for every $\phi \in \mathcal{L}(\tau)$, 
\begin{enumerate}
\item
the rank of $H(\phi, \Box)$ is finite, and
\item
all equivalence classes of $\equiv_{\phi,\Box}$ are definable with formulas of $\mathcal{L}$
with quantifier rank $ \leq qr(\phi)$.
\end{enumerate}
Then $\mathcal{L}$ has the FV-property for $\Box$.
\end{prop}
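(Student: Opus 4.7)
The plan is to extract from hypotheses (i) and (ii) a finite set of $\cL(\tau)^q$-formulas $\psi_1, \ldots, \psi_k$ that classify $\tau$-structures up to the behavior relevant to $\phi$, and then to encode the action of $\Box$ on classes as a Boolean function $B_\phi$.

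First I fix $\phi \in \cL(\tau)$ with $\rho(\phi) = q$. By hypothesis (i) and \autoref{prop:index}, the equivalence $\equiv_{\phi,\Box}$ has finitely many classes $E_1, \ldots, E_k$; by hypothesis (ii) each $E_i$ is defined by a formula $\psi_i \in \cL(\tau)$ of quantifier rank at most $q$. Since $\cL$ is nice and Boolean operations preserve maximal rank, I can pad each $\psi_i$ to rank exactly $q$ so that $\psi_i \in \cL(\tau)^q$. The $\psi_i$ are then mutually exclusive and exhaustive, so every $\tau$-structure $\fA$ satisfies exactly one $\psi_{\iota(\fA)}$.

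Next I build a composition table: for each $(i,j) \in \{1,\ldots,k\}^2$, pick representatives $\fA_i \in E_i$, $\fA_j \in E_j$ and set $T(i,j) = 1$ iff $\fA_i \Box \fA_j \models \phi$. I then define
\[
B_\phi(x_1,\ldots,x_k,y_1,\ldots,y_k) \;=\; \bigvee_{(i,j):\,T(i,j)=1} (x_i \wedge y_j).
\]
Once $T$ is shown to be well-defined, the FV-property is immediate: any $\fA$ satisfies a unique $\psi_i$ and any $\fB$ a unique $\psi_j$, so $\fA \Box \fB \models \phi$ iff $T(i,j)=1$ iff $B_\phi(\psi_1^A,\ldots,\psi_k^A,\psi_1^B,\ldots,\psi_k^B) = 1$.

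The main obstacle is well-definedness of $T$, that is, showing $\fA \Box \fB \models \phi$ depends only on the pair $(\iota(\fA), \iota(\fB))$. Independence in the first coordinate is immediate from the definition of $\equiv_{\phi,\Box}$ (test with $\fC = \fB$). Independence in the second coordinate is the crux, because $\equiv_{\phi,\Box}$ is defined by right-composition tests whereas we need stability when $\fB$ is varied on the right of a fixed $\fA$. Associativity is the lever: substituting $\fC \mapsto \fD \Box \fC$ in the definition shows $\equiv_{\phi,\Box}$ is a right $\Box$-congruence, so $\fB \Box \fC \equiv_{\phi,\Box} \fB' \Box \fC$ whenever $\fB \equiv_{\phi,\Box} \fB'$. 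To push this through to $\fA \Box \fB \models \phi \iff \fA \Box \fB' \models \phi$ one exploits that hypothesis (i) bounds the column rank of $H(\phi,\Box)$ as well, so the column-equivalence also has finite index; if necessary one refines the partition $\{E_i\}$ by the column-equivalence, and the refined classes remain $\cL(\tau)^q$-definable as Boolean combinations of the $\psi_i$ via (ii) (in the natural case where $\Box$ arises from a commutative disjoint-union construction, row and column equivalences coincide up to isomorphism and no refinement is required). Reconciling this row/column asymmetry under associativity is the only genuine technical wrinkle of the proof.
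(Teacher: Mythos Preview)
The paper states \autoref{prop:local} without proof (the details are deferred to \cite{up:LabaiMakowskyFV}), so there is nothing in the paper to compare against and I assess your argument on its own terms. Your plan is the right one and you correctly locate the crux: $T(i,j)$ must be independent of the representative chosen in the \emph{second} coordinate, whereas $\equiv_{\phi,\Box}$ is by definition a row relation and controls only the first. The associativity computation you give shows that $\equiv_{\phi,\Box}$ is stable under \emph{right} multiplication ($\fB \equiv_{\phi,\Box} \fB'$ implies $\fB \Box \fC \equiv_{\phi,\Box} \fB' \Box \fC$), but what you need is stability under \emph{left} multiplication, namely $\fA \Box \fB \models \phi \Leftrightarrow \fA \Box \fB' \models \phi$ for every $\fA$; the former does not yield the latter.

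Your fallback---refine by the column-equivalence and declare the refined cells ``$\cL(\tau)^q$-definable as Boolean combinations of the $\psi_i$ via (ii)''---is where the argument actually breaks. Column-equivalence classes need not be unions of row-equivalence classes, so hypothesis (ii), which speaks only of the row relation $\equiv_{\phi,\Box}$, does not make them definable. Your parenthetical about the commutative case is the honest admission that the general case is not handled. What closes the gap in the paper's intended application (\autoref{th:fv-main}) is that there the hypotheses hold for \emph{all} sum-like operations simultaneously: the column-classes of $H(\phi,\Box)$ are exactly the row-classes of $H(\phi,\Box^{\mathrm{op}})$ with $\Box^{\mathrm{op}}(\fA,\fB)=\fB\Box\fA$, and $\Box^{\mathrm{op}}$ is again sum-like since $\sqcup$ is commutative up to isomorphism, so (ii) applied to $\Box^{\mathrm{op}}$ gives definability of the column-classes at rank $\le q$. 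For \autoref{prop:local} as literally stated, with a single fixed associative $\Box$, you would need to add that (ii) also holds for $\Box^{\mathrm{op}}$ (or that $\Box$ is commutative); associativity alone does not supply column-class definability.
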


We have now shown that $\mathcal{L}$ having the FV-property for $\Box$ implies that $\Box$ is $\mathcal{L}$-smooth,
and that smoothness implies finite rank, or equivalently, finite index.

In fact we have:
\begin{thm}
\label{th:fv-main}
Let $\cL$ be a nice S-closed logic and let $\Box_1$ be a sum-like operation.
Then the following are equivalent:
\begin{enumerate}
\item
$\cL$ has the FV-property for every sum-like operation $\Box$.
\item
$\Box_1$ is $\cL$-smooth. 
\item
For all $\phi \in \cL(\tau)$ and every sum-like $\Box$, the $\Box$-rank of $\phi$ is finite.
\item
For all $\phi \in \cL(\tau)$ and every sum-like $\Box$, the index of
$\equiv_{\phi, \Box}$ is finite.
\end{enumerate}
The same holds if we replace S-closed and  sum-like by P-closed and product-like.
\end{thm}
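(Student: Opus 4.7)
The plan is to establish the cycle of implications $(i) \Rightarrow (ii) \Rightarrow (iii) \Leftrightarrow (iv) \Rightarrow (i)$, reusing the earlier lemmas for the easy directions. For $(i) \Rightarrow (ii)$, apply \autoref{prop:fv-1}(i): the FV-property for any single operation forces that operation to be $\cL$-smooth, and (i) gives FV for $\Box_1$ in particular. For $(ii) \Rightarrow (iii)$, invoke \autoref{th:fv-2}, which converts smoothness of an operation into finite Hankel rank on every $\cL$-definable property; since $\Box_1$ was chosen as an arbitrary sum-like operation, the same reasoning applies to each sum-like $\Box$, yielding the uniform statement (iii). The equivalence $(iii) \Leftrightarrow (iv)$ is \autoref{prop:index}.

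The substantive step is $(iv) \Rightarrow (i)$. Fix a sum-like $\Box$ and $\phi \in \cL(\tau)^q$; the plan is to invoke \autoref{prop:local}, which delivers the FV-property for $\Box$ provided (a) the rank of $H(\phi,\Box)$ is finite, and (b) each equivalence class of $\equiv_{\phi,\Box}$ is definable by a formula of quantifier rank at most $q$. Condition (a) is immediate from (iv) together with \autoref{prop:index}, and S-closedness supplies $\cL(\tau)$-definability of every equivalence class; the associativity required by \autoref{prop:local} is inherited from the disjoint union underlying any sum-like operation, since the quantifier-free transduction defining $\Box$ commutes appropriately. The delicate point is the quantifier rank bound (b), as S-closedness alone puts no a priori restriction on the rank of the defining formulas.

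The hard part will be establishing (b). The strategy is to exploit niceness: up to logical equivalence $\cL(\tau)^q$ contains only finitely many sentences, so $\sim^q_\cL$ is a finite partition whose classes are defined by Hintikka sentences at rank $q$. Once one shows that $\sim^q_\cL$ refines $\equiv_{\phi,\Box}$, each class of $\equiv_{\phi,\Box}$ is a disjunction of rank-$q$ Hintikka sentences and (b) holds. To prove this refinement, suppose $\fA_1 \sim^q_\cL \fA_2$ lie in distinct $\equiv_{\phi,\Box}$-classes; then some $\fC$ witnesses $\fA_1 \Box \fC \models \phi$ while $\fA_2 \Box \fC \not\models \phi$. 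The plan is to translate $\phi$ through the quantifier-free transduction defining $\Box$ and combine it with the $\cL$-definitions of the equivalence classes produced by S-closedness, to manufacture from $\phi$ an $\cL(\tau)^q$-sentence separating $\fA_1$ from $\fA_2$, contradicting $\fA_1 \sim^q_\cL \fA_2$. The bookkeeping here, which relies on quantifier-free transductions not raising the rank and on the finiteness of the $\equiv_{\phi,\Box}$-partition, is the main source of technical difficulty. With (b) in hand, \autoref{prop:local} yields the FV-property for $\Box$, and since $\Box$ was an arbitrary sum-like operation, (i) follows. The P-closed/product-like case is proved identically after substituting product-like operations, P-closedness, and the product-like analogue of \autoref{prop:local}.
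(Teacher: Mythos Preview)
Your cycle $(i)\Rightarrow(ii)\Rightarrow(iii)\Leftrightarrow(iv)\Rightarrow(i)$ and the four lemmas you invoke match the paper's proof exactly: the paper proves the theorem with nothing more than one-line citations of \autoref{prop:fv-1}, \autoref{th:fv-2}, \autoref{prop:index}, and \autoref{prop:local}. Everything beyond those citations in your write-up is your own addition.

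Two of those additions contain genuine errors. First, your claim that every sum-like operation inherits associativity from $\sqcup$ is false: the operation $\sqcup^c$ of \autoref{th:1}(ii) (disjoint union followed by complementation) is sum-like but not associative, as one checks already on the triple $K_2,K_1,K_1$. So either the associativity hypothesis of \autoref{prop:local} must be weakened, or the application is not as direct as you (or the paper) suggest. Second, your strategy for the rank bound (b), namely showing that $\sim^q_{\cL}$ refines $\equiv_{\phi,\Box}$, is circular: the rank-$q$ sentence you hope to ``manufacture'' to separate $\fA_1$ from $\fA_2$ would itself require that the column-sets $\{\fA:\fA\Box\fC\models\phi\}$, or equivalently the $\equiv_{\phi,\Box}$-classes, be definable at rank $\le q$, which is precisely (b). S-closedness supplies definability with no rank control, and finite index alone does not force $\equiv_{\phi,\Box}$ to be coarser than $\sim^q_{\cL}$; that coarseness is essentially one-sided smoothness, which is not among your hypotheses here.

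There is also a quantifier slip in your $(ii)\Rightarrow(iii)$: you argue that ``since $\Box_1$ was chosen as an arbitrary sum-like operation, the same reasoning applies to each sum-like $\Box$'', but within the proof of the equivalence for a \emph{fixed} $\Box_1$ you only have smoothness of that single $\Box_1$, and \autoref{th:fv-2} then gives only finite $\Box_1$-rank, not (iii). The paper's one-line citation of \autoref{th:fv-2} faces the identical gap, so this is a looseness you inherit from the paper rather than introduce.
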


\begin{proof}
(i) implies (ii) is \autoref{prop:fv-1}.
\\
(ii) implies (iii) is \autoref{th:fv-2}.
\\
(iii) is equivalent to (iv) by  \autoref{prop:index}.
\\
Finally, (iii) implies (i) is  \autoref{prop:local}.
\end{proof}

\section{The S-closure of a nice logic}
\label{se:sclosure}

Let $\cL$ be a nice logic of finite S-rank with quantifier rank $\rho$. 
We define
$Cl_S(\cL)$ to be the smallest Lindstr\"om logic
such that for all sum-like 
$$
\Box: \Str(\sigma) \times \Str(\sigma) \rightarrow \Str(\tau)
$$ 
and all $Cl_S(\cL)$-definable $\tau$-properties $\Phi$,
all the equivalence classes of $\equiv_{\Phi, \Box}$ are also definable in $Cl_S(\cL)$.
This gives us a Lindstr\"om logic which is S-closed. 
However, in order to be a nice logic, we have to extend $\rho$ to $\rho'$ in such a way that ensures it is still nice.

We proceed inductively. 
Recall that there are only finitely many sum-like operations $\Box$ for fixed $\sigma$ and $\tau$.
Let $\ell(\sigma) = \sum_{R \in \sigma} r(R)+1$ where $R$ is a relation symbol of arity $r(R)$ or a constant symbol of arity $0$.
Two vocabularies are {\em similar} if they have the same number of symbols of the same arity.
The effect of a sum-like operation only depends on the similarity type of $\sigma$ and $\tau$.
Hence for fixed $\ell(\sigma)$ and $\ell(\tau)$, there are only finitely many sum-like operations.

A typical step in the induction is as follows.

Given $\cL$ and $\phi \in \cL(\tau)^{\rho(\phi)}$ and a sum-like 
$ \Box: \Str(\sigma) \times \Str(\sigma) \rightarrow \Str(\tau) $, 
there are only finitely many equivalence classes of $\equiv_{\phi, \Box}$.
Let $E_i= E(\phi, \Box)_i$ with $i \leq \alpha=\alpha(\phi, \Box)$ be a list of these equivalence classes.  

We form $\cL'$ with quantifier rank $\rho'$ as follows:
If $E_i$ is not definable in $\cL(\sigma)$ then we add it to $\cL$ using a Lindstr\"om quantifier 
with quantifier rank $\rho'(E_i)=\rho(\phi)+\ell(\sigma) + \ell(\tau)$.

$\cL'$ is a Lindstr\"om logic. We have to show that $\rho'$ is nice, i.e., for fixed $q$ and fixed number of free variables,
$\cL'(\tau)^q$ is finite up to logical equivalence. 
This follows from the fact that we only added finitely many Lindstr\"om quantifiers and that
for all $\phi \in \cL$ we have that $\rho'(\phi) = \rho(\phi)$.

For our induction we start with $\cL_0 = \cL$. 
$\cL_1$ is obtained by doing the typical step for each $\phi \in \cL_0$ and each sum-like $\Box$.
$\rho_1$ is the union of all quantifier rank functions of the previous steps. 
We still have iterate this process by defining $\cL_j$ and $\rho_j$ and take the limit.

We finally get:
\begin{thm}
Let $\cL$ be nice with quantifier rank $\rho$ and of finite S-rank.
Then $Cl_S(\cL)$ with quantifier rank $\rho'$ is nice and has the FV-property for all sum-like operations.
\end{thm}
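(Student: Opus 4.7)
The plan is to realize $Cl_S(\cL)$ and $\rho'$ as the direct limits $\bigcup_j \cL_j$ and $\bigcup_j \rho_j$ of the inductive construction sketched above, then verify in order that (a) $Cl_S(\cL)$ is indeed a Lindström logic, (b) $\rho'$ is a nice quantifier rank, and (c) $Cl_S(\cL)$ retains finite S-rank. S-closedness is automatic from the construction, so once (b) and (c) hold, \autoref{th:fv-main} (equivalence of its clauses (i) and (iii)) immediately delivers the FV-property for every sum-like operation. Item (a) is a routine check: Boolean closure, first-order quantification, and the adjoined Lindström quantifiers all persist under unions.

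For (b), I would prove by strong induction on $q$ that for every similarity type $\tau$ the set $Cl_S(\cL)(\tau)^{\le q}$ is finite modulo logical equivalence. The base case $q = 0$ uses niceness of $\cL$ together with the fact that rank-$0$ formulas are Boolean combinations of atomic formulas. For the inductive step, a formula $\psi \in Cl_S(\cL)(\tau)^{\le q}$ is built by Boolean/FO operations from either formulas of $\cL(\tau)^{\le q}$, which are finite up to equivalence by niceness of $\cL$, or applications of adjoined Lindström quantifiers $Q_{E_i}$ introduced for some $\phi \in \cL_j(\tau^\ast)$ and sum-like $\Box:\Str(\sigma)\times\Str(\sigma)\to\Str(\tau^\ast)$ with $\rho_j(\phi) + \ell(\sigma) + \ell(\tau^\ast) \le q$. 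Because $\ell(\sigma),\ell(\tau^\ast)\ge 1$, only finitely many similarity types $(\sigma,\tau^\ast)$ can enter; for each such pair only finitely many sum-like $\Box$ exist, by the proposition in \autoref{subs:sumlike}; and by the induction hypothesis only finitely many $\phi$ of q-rank strictly below $q$ arise up to equivalence. Hence only finitely many new quantifiers contribute to $Cl_S(\cL)(\tau)^{\le q}$, and finiteness survives Boolean/FO closure.

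For (c), the central lemma is: if $\Phi \subseteq \Str(\tau)$ has finite S-rank and $\Box:\Str(\sigma)\times\Str(\sigma)\to\Str(\tau)$ is sum-like, then every $(\Phi,\Box)$-equivalence class $E \subseteq \Str(\sigma)$ also has finite S-rank. Fixing any sum-like $\Box':\Str(\sigma')\times\Str(\sigma')\to\Str(\sigma)$, I would show $\equiv_{E,\Box'}$ has finite index and conclude via \autoref{prop:index}. Unfolding, $X\equiv_{E,\Box'}Y$ holds exactly when for all $Z\in\Str(\sigma')$ and $W\in\Str(\sigma)$ we have $(X\Box' Z)\Box W\in\Phi$ iff $(Y\Box' Z)\Box W\in\Phi$. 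Since $\sqcup$ is associative and compositions of quantifier-free transductions are quantifier-free, the map $(X,Z,W)\mapsto (X\Box' Z)\Box W$ can be repackaged, after encoding the pair $(Z,W)$ as a single structure $Z\sqcup_\ast W$ over the combined vocabulary with marker predicates, as one sum-like operation $\Box^\ast:\Str(\sigma^+)\times\Str(\sigma^+)\to\Str(\tau)$ applied to $X$ and the encoded pair. Finite S-rank of $\Phi$ then forces finite $\Box^\ast$-index, which dominates the $\Box'$-index of $E$. Applying this lemma along the construction shows every added $Q_{E_i}$ names a property of finite S-rank, so $Cl_S(\cL)$ stays inside the abstract Lindström logic $\mathcal{S}$ of \autoref{th:nadia} and therefore has finite S-rank.

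The main obstacle I anticipate is the encoding in the central lemma: one must exhibit $\Box^\ast$ as a genuinely scalar quantifier-free transduction of a single disjoint union on one vocabulary, matching the precise definition of sum-like in \autoref{subs:sumlike}. A subsidiary technicality is synchronizing the q-rank accounting across the inductive stages so that each added $Q_{E_i}$ witnesses $E_i$ at its declared rank $\rho(\phi)+\ell(\sigma)+\ell(\tau)$ in the limit, ensuring niceness is not silently violated when a later stage reuses an older similarity type.
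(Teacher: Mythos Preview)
Your plan tracks the paper's route: \autoref{se:sclosure} only sketches the iterated construction of $Cl_S(\cL)$ and $\rho'$ and defers all verification to \cite{up:LabaiMakowskyFV}, so your items (a)--(c) together with the appeal to \autoref{th:fv-main} are precisely the steps the paper leaves implicit, and your central lemma in (c) --- that each $(\Phi,\Box)$-class of a finite-S-rank $\Phi$ again has finite S-rank --- is the right missing ingredient. One small slip: in (c) the ``exactly when'' is false, since your right-hand side asserts $X\Box' Z \equiv_{\Phi,\Box} Y\Box' Z$ for every $Z$, which is strictly finer than $\equiv_{E,\Box'}$ (the latter only asks that $X\Box' Z$ and $Y\Box' Z$ agree on membership in the single class $E$, not that they lie in the same $(\Phi,\Box)$-class). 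Your argument survives because a finer equivalence with finite index suffices; write ``whenever'' instead.

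The more substantive gap is the closing appeal to \autoref{th:fv-main}. Its direction (iii)$\Rightarrow$(i) goes through \autoref{prop:local}, whose hypothesis (ii) requires the $\equiv_{\phi,\Box}$-classes to be definable at quantifier rank $\le\rho'(\phi)$. But the construction assigns $Q_{E_i}$ rank $\rho(\phi)+\ell(\sigma)+\ell(\tau)>\rho(\phi)=\rho'(\phi)$, so S-closedness of $Cl_S(\cL)$ as built does not give rank-bounded definability of the classes, and the FV-property of \autoref{def:fv}(iii) --- which demands the reduction sentences $\psi_i$ at the same rank $q$ as $\phi$ --- is not yet secured. Your ``subsidiary technicality'' about rank accounting is therefore not subsidiary at all: either $\rho'$ must be redesigned so that the classes for $\phi$ land at rank $\le\rho'(\phi)$ without destroying niceness, or the FV-property must be argued directly from the Hintikka sentences of $Cl_S(\cL)$ at each level rather than by quoting \autoref{th:fv-main}. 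The paper itself does not resolve this in the text; it is exactly part of what is deferred.
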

The details will be published in \cite{up:LabaiMakowskyFV}.

\section{Conclusions and open problems}
\label{se:conclu}
At the beginning of this paper
we asked whether one can characterize logics over finite structures which
satisfy the Feferman-Vaught Theorem for the disjoint union, or more generally, for sum-like and product-like
operations on $\tau$-structures.
The purpose of this paper was to investigate new directions to attack this problem,
specifically by relating the Feferman-Vaught Theorem to Hankel matrices of finite rank.
\autoref{th:fv-main} describes their exact relationship.

We also investigated under which conditions one can construct logics satisfying the Feferman-Vaught Theorem.
 \autoref{th:1} shows that there are uncountably many $\tau$-properties which have
finite rank Hankel matrices for specific sum-like operations.
\autoref{th:nadia} shows the existence of maximal
Lindstr\"om logics $\mathcal{S}$ and $\mathcal{P}$ where all
their definable $\tau$-properties have finite rank for 
all sum-like, respectively product-like, operations.
However, we have no explicit description of these maximal logics.

\begin{oproblem}
\ 
\begin{enumerate}
\item
Is every $\tau$-property with finite P-rank  (or both finite P-rank and finite C-rank)  definable
in $\CFOL$?
\item
Is every $\tau$-property with finite S-rank (finite C-rank)  definable
in $\CMSOL$? 
\end{enumerate}
\end{oproblem}
In case the answers to the above are negative, we can ask:
\begin{oproblem}
\ 
\begin{enumerate}
\item
How many $\tau$-properties are there with finite S-rank (P-rank, C-rank)?
\item
Is there a nice Gurevich logic where all the 
$\tau$-properties in $\mathcal{S}$ are definable?
\end{enumerate}
\end{oproblem}
In \cite[Section 7, Conjecture 2]{ar:MakowskyTARSKI} it is conjectured 
that there are continuum 
many nice Gurevich logics with the FV-property for the disjoint union.
Adding $C_A$ or $P_A$ 
from  \autoref{th:1}   
for fixed $A \subseteq \N$ 
as Lindstr\"om quantifiers to $\FOL$ together with all the 
equivalence classes of 
$\equiv_{C_A, \sqcup}$ 
or
$\equiv_{P_A, \sqcup}$ 
gives us a nice Lindstr\"om logic. However, the  definable $\tau_{graph}$-property
that the complement of a graph $G$ is in $C_A$ has infinite $\sqcup$-rank,
see  \autoref{th:1}(ii).

\begin{oproblem}
How many different nice Gurevich logics with the FV-property for the disjoint union are there?
\end{oproblem}

A similar analysis for connection-like operations will be developed in \cite{up:LabaiMakowskyFV}.
{\small
\subsubsection*{Acknowledgments}
We would like to thank T. Kotek for letting us use his example, and for valuable discussions.

}

\end{document}